\documentclass[12pt]{amsart}
\pagestyle{plain} 

\title[Wandering domains of tori]{No round wandering domains for ${\bf C^1}$-diffeomorphisms of tori}

\author{Sergei Merenkov}

\address{Department of Mathematics\\City College of New York and CUNY Graduate Center,  New York, NY 10031, USA.
}
\email{smerenkov@ccny.cuny.edu}



\usepackage{amsmath, amssymb, amsthm,latexsym}
\usepackage{graphicx}

\newcommand\N{{\mathbb N}}

\newcommand\Z{{\mathbb Z}}
\newcommand\R{{\mathbb R}}
\newcommand\T{{\mathbb T}}

\newcommand\Sph{{\mathbb S}}

\newcommand\dee{\partial}

\newcommand\id{\operatorname{id}}

\renewcommand\:{\colon}

\newcommand\no{\noindent} 

\newtheorem{theorem}{Theorem}[section]

\newtheorem{conjecture}[theorem]{Conjecture}

\newtheorem{lemma}[theorem]{Lemma}

\theoremstyle{definition}

\begin{document}


\abstract{We prove that if $n\geq 2$, then there is no $C^1$-dif\-feo\-mor\-phism $f$ of $n$-torus, such that $f$ is semi-conjugate to a minimal translation and its wandering domains are geometric balls. This improves a recent result of A. Navas, who proved it assuming $C^{n+1}$ regularity of $f$.} 
\endabstract

\maketitle

\section{Introduction}\label{s:Intro}
\no 
In a recent paper, A.~Navas~\cite{Na17} proved non-existence  of a $C^{n+1}$-diffeomorphism $f$ of $n$-torus, $n\geq 2$, such that $f$ is semi-conjugate to a minimal translation and has round wandering domains, i.e., the wandering domains are non-trivial geometric balls. He also pointed out that it is unknown whether $C^{n+1}$ regularity is needed under the assumption that the wandering domains are round. 

In the present note we show that indeed one can lower regularity to $C^1$ for all $n\geq 2$. Namely, the main result of this note is the following theorem; please see Section~\ref{S:Prelim} for the definition of Denjoy-type homeomorphisms.

\begin{theorem}\label{T:Main}
If $n\geq 2$, there is no Denjoy-type $C^1$-dif\-feo\-mor\-phism $f$ of $\T^n$ all of whose wandering domains are round.
\end{theorem}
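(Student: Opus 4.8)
The plan is to extract from the roundness hypothesis the information that $f$ is infinitesimally conformal along the minimal set, and then to run a Denjoy-type distortion argument to reach a contradiction with the summability of the volumes of the wandering domains. First I would fix notation: write the wandering domains as round balls $B_k = B(c_k,r_k)$, $k\in\Z$, ordered so that $f(B_k)=B_{k+1}$, and let $K=\T^n\setminus\bigcup_k B_k$ be the associated minimal set, which is $f$-invariant and nowhere dense. Since the $B_k$ are pairwise disjoint, $\sum_k r_k^n<\infty$; in particular $r_k\to 0$ as $|k|\to\infty$, and the centers $c_k$ accumulate on a dense subset of $K$. Because the balls are open and disjoint, each boundary sphere $\partial B_k$ is contained in $K$.

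The main lemma is that $C^1$ regularity together with roundness forces $Df$ to be a Euclidean similarity at every point of $K$. To see this I would rescale: along a subsequence with $c_k\to z_0\in K$ and $r_k\to 0$, set $G_k(u)=r_{k+1}^{-1}\bigl(f(c_k+r_k u)-c_{k+1}\bigr)$ on the closed unit ball. Uniform continuity of $Df$ on the compact torus yields uniform differentiability, so (noting that the ratios $r_{k+1}/r_k$ are bounded away from $0$ and $\infty$ because $\|Df^{\pm1}\|$ is bounded) after passing to a further subsequence the maps $G_k$ converge uniformly to an affine map $G_\infty$. As $f$ carries $B_k$ onto $B_{k+1}$ and $\partial B_k$ onto $\partial B_{k+1}$, each $G_k$ maps the closed unit ball onto itself and the unit sphere onto itself; the same holds for the inverses, so $G_\infty$ is an affine bijection carrying the unit sphere onto itself, that is, a Euclidean similarity. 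Hence $Df(z_0)$ is a similarity, and since such $z_0$ are dense in $K$ while $z\mapsto\|Df(z)\|\,\|Df(z)^{-1}\|$ is continuous, $Df$ is a similarity at every point of $K$. This step uses only first-order information, which is exactly why $C^1$ suffices; it is also vacuous when $n=1$, consistent with the existence of $C^1$ Denjoy counterexamples on the circle.

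Finally, and this is where $n\geq 2$ must genuinely be exploited, I would convert infinitesimal conformality along $K$ into a distortion bound for the iterates. Since each sphere $\partial B_k$ lies in $K$, the map $f$ is infinitesimally conformal on $\partial B_k$; combined with $C^1$-continuity and $r_k\to 0$, the quasiconformal distortion that $f$ introduces on the small ball $B_k$ is controlled by the modulus of continuity of $Df$ evaluated at scale $r_k$. Feeding this into the Denjoy--Koksma combinatorics for the minimal translation to which $f$ is semi-conjugate, one should obtain, along the return times coming from the simultaneous-approximation denominators, a uniform bound on the distortion of $f^{q}|_{B_0}$, whence $\diam(f^{q}(B_0))=\diam(B_{q})\gtrsim\diam(B_0)$ along this sequence. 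That produces infinitely many pairwise disjoint balls of definite size, contradicting $\sum_k r_k^n<\infty$. I expect this last step to be the principal obstacle: one must control the non-conformal behavior of $f$ on the interiors of the balls, which the roundness hypothesis does not see, and show that it does not accumulate along the chosen return times. This is precisely the mechanism that conformality on $K$ is meant to supply in dimension $n\geq 2$ and that has no analogue in dimension one.
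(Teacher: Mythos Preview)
Your first step---showing that $Df$ is a Euclidean similarity at every point of the minimal set $K$---is correct and is essentially the paper's first lemma (conformality of the lift $\tilde f$ on $\tilde\Lambda$), proved by the same rescaling argument.

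The gap is in your second step, and you identify it yourself: to run a distortion argument you must control the composition $f^q$ on the ball $B_0$, and the non-conformality of $f$ on the \emph{interiors} of the $B_k$ is not touched by the roundness hypothesis. Concretely, conformality of $Df$ on $\partial B_k$ together with $C^1$-continuity gives distortion at most $1+C\,\omega(r_k)$ on $B_k$, where $\omega$ is the modulus of continuity of $Df$; to bound the product along an orbit segment of length $q$ you need $\sum_{k=0}^{q-1}\omega(r_k)$ bounded along the chosen return times. With only $C^1$ regularity, $\omega$ may decay arbitrarily slowly relative to the only a~priori information $\sum_k r_k^n<\infty$, and Denjoy--Koksma-type estimates do not obviously help: the quantity $\omega(r_k)$ is not the Birkhoff sum of a fixed function on the torus but depends on the entire orbit geometry. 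This is precisely the obstruction that forces higher regularity in distortion-based approaches, and your sketch does not overcome it.

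The paper bypasses this entirely by a rigidity argument in place of a distortion estimate. After establishing conformality of $D\tilde f$ on $\tilde\Lambda$, it views $\tilde\Lambda\cup\{\infty\}$ as a Schottky set in $\Sph^n$ and uses the associated reflection group $\Gamma_{\tilde\Lambda}$ to \emph{redefine} $\tilde f$ on the complementary balls, obtaining a $\Gamma_{\tilde\Lambda}$-equivariant quasiconformal map $\tilde f_{\tilde\Lambda}$ of $\Sph^n$. One then shows $\tilde f_{\tilde\Lambda}$ is $1$-quasiconformal almost everywhere: at density points of reflected copies of $\tilde\Lambda$ by reducing to your Step~1, and at points covered by nested sequences of reflected balls by the balls-to-balls lemma. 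Since $n\ge2$, a quasiconformal map of $\Sph^n$ that is $1$-quasiconformal a.e.\ is M\"obius; as it fixes $\infty$ and agrees with the $\Z^n$-periodic lift $\tilde f$ on $\tilde\Lambda$, it must be a Euclidean isometry. Hence all the $r_k$ are equal, contradicting $\sum_k r_k^n<\infty$. The dimension hypothesis enters through Liouville-type rigidity of conformal maps, not through any Birkhoff-sum estimate.
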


A classical result of A.~Denjoy~\cite{De32} states  that the action of an orientation preserving $C^2$-diffeomorphism of the circle with irrational rotation number is minimal.
The question of whether a homeomorphism of 2-torus that is semi-conjugate (but not conjugate) to a mi\-ni\-mal translation can have uniform conformal geometry along orbits of preimages of points under the semi-conjugation originates from a paper by A.~Norton and D.~Sullivan~\cite{NS96}. They proved, in particular, that no $C^3$-diffeomorphism $f$ of 2-torus exists such that the wandering domains of $f$ are non-degenerate round discs. 
Our Theorem~\ref{T:Main} gives a generalization  of this result to higher dimensions and under a lower regularity assumption.

Norton--Sullivan's proofs rely on Ahlfors--Bers and Sullivan's integrability results for quasiconformal maps. 
In contrast, A.~Navas uses Brouwer's fixed point theorem to arrive at a contradiction. 
The arguments of the present paper are different from both of these and use techniques developed in joint work of the author with M.~Bonk and B.~Kleiner~\cite{BKM09}.  The main tools here are reflection groups, also known as Schottky groups. We use such groups to redefine a lift of a hypothetical map to be equivariant. Equivarience implies conformality, which leads to a contradiction. 

Finally, we would like to mention works of P.~D.~McSwiggen~\cite{McS93, McS95}, where he constructs $C^{n+1-\epsilon}$-diffeomorphisms of $n$-tori that are semi-conjugate to minimal translations and have wandering domains that are topological discs. These wandering domains have non-uniform geometry. McSwiggen's results are higher-dimensional generalizations of classical results by P.~Bohl~\cite{Bo16}, A.~Denjoy~\cite{De32}, and M.-R.~Herman~\cite{He79} who established them for circle diffeomorphisms. For a construction of $C^{3-\epsilon}$-diffeomorphisms $f$ of 2-torus $\T^2$ similar to that of McSwiggen, but so that such maps $f$ possess additional properties, please consult~\cite{PS13}. In this case $f$ is homotopic and semi-conjugate (but not conjugate) to a minimal translation, and for a semi-conjugacy $h$ one has that for each $p\in\T^2$, the set $\{h^{-1}(p)\}$ is either a point or an arc. Moreover, there
are uncountably many points $p$ such that $\{h^{-1}(p)\}$ is a nontrivial arc.

\medskip\noindent 
\textbf{Acknowledgments.} The author thanks Andr\'es Navas for his comments on an earlier version of this paper.

\section{Preliminaries}\label{S:Prelim}
\no
\subsection{Denjoy-type homeomorphisms}\label{SS:Denjoy}
By $n$-\emph{torus} we mean the quotient $\T^n=\R^n/\Z^n$ of the $n$-dimensional Euclidean space $\R^n$ by the integer lattice $\Z^n$. In other words, $\T^n$ consists of equivalence classes of elements $x\in\R^n$, where $x\sim y$ if and only if $x-y=m\in\Z^n$.  We denote the quotient map by $\psi$. A \emph{minimal translation} $R$ of $\T^n$ is a homeomorphism of $\T^n$ such that $R$ lifts under $\psi$ to a translation $\tilde R$ of $\R^n$, and every point $p\in \T^n$ has a dense orbit under the action of the map $R$.  

Following~\cite{NS96} we say that an orientation preserving homeomorphism $f$ of $\T^n$ has \emph{Denjoy-type} if there exists a continuous map $h$ of $\T^n$ to itself, such that 

\no
-- $h$ is homotopic to the identity,  

\no
-- the set $V_h$ of \emph{non-trivial values} of $h$ (i.e., elements $p\in\T^n$ such that $\#\{h^{-1}(p)\}>1$) is countable,

\no
-- there is a minimal translation $R$ of $\T^n$  such that $f$ and $R$ are semi-conjugate via the map $h$, i.e.,
$$
h\circ f=R\circ h,
$$  

\no 
-- the map $f$ is not conjugate to a minimal translation.

Let us suppose that $f$ is a Denjoy-type homeomorphism of $\T^n$, let $h$ be a semi-conjugation map as above, and let
$$
\Lambda=\T^n\setminus\cup_{p\in V_h}\{{\rm int}(h^{-1}(p))\},
$$
where ${\rm int}(A)$ stands for the interior of a set $A$. 
From this definition we conclude that $\Lambda$ is a closed subset of $\T^n$ that is \emph{completely  
invariant}   (i.e., forward and backward invariant),
and \emph{minimal} (i.e., the orbit of each point of the set is dense in this set) under the map $f$. Also, if there exists $p\in V_h$ such that ${\rm int}(h^{-1}(p))\neq\emptyset$, then $\Lambda$ is nowhere dense in $\T^n$. Finally, each $\Omega_p={\rm int}(h^{-1}(p))$ is a \emph{wandering domain} for $f$, i.e., $f^m(\Omega_p)=\Omega_p,\ m\in\Z$, implies $m=0$. 

We say that $f$ has \emph{round} wandering domains if
$$
\R^n\setminus\tilde\Lambda=\cup_{i=1}^\infty  B_i,
$$
where $\tilde\Lambda=\psi^{-1}(\Lambda)$, for each $i$, the set $B_i$ is an open Euclidean ball with non-empty interior, and the balls $B_i$ are pairwise disjoint, i.e., $B_i\cap B_j=\emptyset,\ i\neq j$.  

\subsection{Quasiconformal and quasisymmetric maps}\label{SS:QcQs}
In the proof of Theorem~\ref{T:Main} we use basic properties of quasiconformal and quasisymmetric maps. Let $\Sph^n$ denote the \emph{standard $n$-sphere}, i.e., the unit sphere in $\R^{n+1}$  endowed with the chordal metric. In what follows, we assume that $n\geq 2$.
 If $f$ is a homeomorphism between open regions $U$ and $V$ in $\R^n$ or  $\Sph^n$, its \emph{dilatation at a point} $p$ is defined as 
$$
K_f(p)=\limsup_{r\to0}\frac{L(p,r)}{l(p,r)},
$$   
where
\begin{equation}\notag
\begin{split}
L(p,r)&=\sup\{|f(q)-f(p)|\: |q-p|=r\},\\ 
l(p,r)&=\inf\{|f(q)-f(p)|\: |q-p|=r\}.
\end{split}
\end{equation}
We say that $f$ is \emph{quasiconformal} if for its \emph{dilatation} $K_f=\sup_{p\in U}K_f(p)$ we have $K_f\leq K<\infty$.  In this case we also say that $f$ is $K$-\emph{quasiconformal}. If $K_f=1$, the map $f$ is called \emph{conformal}.  

Since $n$-torus $\T^n$ is compact, if $f$ is a $C^1$-diffeomorphism of $\T^n$, then  its lift $\tilde f$ to $\R^n$ under $\psi$ is a $K$-quasiconformal homeomorphism for some $K$. Indeed, this follows because $\psi$ is a local isometry and the dilatation $K_{\tilde f}(p)$ depends only on local properties of $\tilde f$, so $K_{\tilde f}$ is a continuous doubly periodic  function and therefore bounded.  

A homeomorphism $f$ between metric spaces $(X, d_X)$ and $(Y, d_Y)$ is called \emph{quasisymmetric} if there exists a homeomorphism $\eta$ of $[0,\infty)$ onto itself, such that
$$
\frac{d_Y(f(x),f(x'))}{d_Y(f(x),f(x''))}\leq\eta\left(\frac{d_X(x,x')}{d_X(x',x'')}\right)
$$
for all distinct triples $x,x'$, and $x''$ in $X$. In this case $\eta$ is called a \emph{distortion function} of $f$ and $f$ is said to be $\eta$-\emph{quasisymmetric}.
It is well known, see, e.g., \cite{He01}, that if $f$ is a homeomorphism of $\R^n$ or $\Sph^n$, then it is quasiconformal if and only if it is quasisymmetric.

An invertible linear map $L$ of $\R^n$  is always quasiconformal. Its {dilatation} $K_L$ is constant and it is given by  
$$
K_L=\frac{\max_{|x|=1}\{|L(x)|\}}{\min_{|y|=1}\{|L(y)|\}}.
$$
Such a map $L$ is conformal if $K_L=1$. 
The conformality of $L$  is equivalent to the statement that $L$ takes a non-degenerate geometric ball to another such ball. In this case $L$ can be written as $L=\lambda T$, where $\lambda>0$ and $T$ is a linear isometry. Here we allow for conformal maps to reverse the orientation. 

\subsection{Schottky sets and groups}\label{SS:SSG}
\no
In this section we review several notions and results from~\cite{BKM09} that will be used in this note. We refer the reader to that paper for more details.

By a \emph{Schottky set} $\Lambda$ in the Euclidean $n$-space $\R^n$ or the standard sphere $\Sph^n$ we mean the complement of a union of pairwise disjoint open geometric balls $B_i,\ i\in I$, in $\R^n$, respectively $\Sph^n$. We often refer to the balls $B_i,\ i\in I$, as \emph{removed} and assume that there are at least three of them. A \emph{peripheral sphere} of a Schottky set $\Lambda$ in $\R^n$ or $\Sph^n$ is the boundary sphere $\dee B_i$ of a removed ball $B_i,\ i\in I$. 

With each Schottky set $\Lambda$ in $\Sph^n$ one can associate a \emph{Schottky group} $\Gamma_\Lambda$. This group is generated by reflections $\gamma_i$ in corresponding peripheral spheres $\dee B_i,\ i\in I$. Each such group $\Gamma_\Lambda$ is a subgroup of M\"obius transformations with generators $\gamma_i,\ i\in I$, and relations $\gamma_i^2=\id,\ i\in I$.

In the proof of the main result below we use the following two lemmas from~\cite{BKM09}.

\begin{lemma}\cite[Proposition~5.5]{BKM09}\label{L:EqvExt}
Let $f\:\Lambda\to\Lambda'$ be a quasisymmetric map between two Schottky sets in $\Sph^n$. 
Then $f$ has an equivariant quasiconformal extension $f_\Lambda$ to $\Sph^n$ with respect to the groups $\Gamma_{\Lambda}$ and $\Gamma_{\Lambda'}$. I.e., for each $\gamma\in\Gamma_\Lambda$ there exists $\gamma'\in\Gamma_{\Lambda'}$ such that $f_\Lambda\circ \gamma=\gamma'\circ f_\Lambda$.
\end{lemma}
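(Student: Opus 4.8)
The plan is to build $f_\Lambda$ by repeatedly reflecting $f$ through the peripheral spheres, exploiting the fact that each Schottky group is the free product of the order-two groups generated by the individual reflections. The first and most delicate input is that $f$ must carry peripheral spheres of $\Lambda$ to peripheral spheres of $\Lambda'$. Granting this, $f$ induces a bijection $i\mapsto\sigma(i)$ of the index sets with $f(\dee B_i)=\dee B'_{\sigma(i)}$, and since $\Gamma_\Lambda$ and $\Gamma_{\Lambda'}$ are free products of copies of $\Z/2\Z$, the assignment $\gamma_i\mapsto\gamma'_{\sigma(i)}$ extends to a group isomorphism $\phi\co\Gamma_\Lambda\to\Gamma_{\Lambda'}$.

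With $\phi$ in hand I would define the extension on the orbit of $\Lambda$ by equivariance: set $f_\Lambda=f$ on $\Lambda$, and for each $\gamma\in\Gamma_\Lambda$ put $f_\Lambda=\phi(\gamma)\circ f\circ\gamma^{-1}$ on the tile $\gamma(\Lambda)$. This is equivariant by construction; the point is that it is well defined. Two tiles $\gamma(\Lambda)$ and $\gamma\gamma_i(\Lambda)$ meet only along the peripheral sphere $\gamma(\dee B_i)$, and on that sphere the two formulas agree precisely because $f(\dee B_i)=\dee B'_{\sigma(i)}$ and $\gamma'_{\sigma(i)}$ fixes $\dee B'_{\sigma(i)}$ pointwise, so the extra reflection acts as the identity on the overlap. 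The reduced-word normal form in the free product guarantees there are no further coincidences to check.

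It remains to pass from the orbit to all of $\Sph^n$ and to verify regularity. Here I would use that the nested removed balls $\gamma(B_i)$ shrink in diameter as the word length of $\gamma$ grows, so that $f_\Lambda$ admits a continuous extension across the residual (limit) set and, together with the analogous structure for $\Lambda'$, is a homeomorphism of $\Sph^n$. For the dilatation, note that conjugating by the M\"obius reflections $\gamma,\gamma'$ leaves the pointwise dilatation unchanged; hence $K_{f_\Lambda}(\cdot)$ is constant along $\Gamma_\Lambda$-orbits and $\sup_{\Sph^n}K_{f_\Lambda}=\sup_\Lambda K_{f_\Lambda}$. Since $f$ is quasisymmetric, its dilatation is bounded in terms of the distortion function $\eta$ alone, so this supremum is finite; the residual set on which continuity was invoked is of measure zero and removable for quasiconformal maps, giving a global uniform bound $K_{f_\Lambda}\le K<\infty$.

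I expect the main obstacle to be the quasiconformality rather than the algebra. Two points need genuine work: first, the peripheral-sphere preservation, which is the only place the geometry of Schottky sets (as opposed to soft quasisymmetry) enters and which I would handle by characterizing peripheral spheres intrinsically; and second, controlling the extension across the limit set---showing that the tile diameters tend to zero, that the limiting map is a homeomorphism, and that the uniform dilatation bound survives the removal of the residual set. Where the Schottky set has empty interior one must in addition extend each boundary quasisymmetric map $f|_{\dee B_i}$ to the ball with uniformly bounded dilatation (by a Tukia--V\"ais\"al\"a type extension) and check that these extensions are compatible with the reflections, which is the technical heart of the argument.
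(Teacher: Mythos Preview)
Your proposal follows essentially the same strategy as the paper's sketch: characterize peripheral spheres topologically, extend by the equivariance formula $\phi(\gamma)\circ f\circ\gamma^{-1}$, observe that conjugation by M\"obius maps preserves dilatation, and invoke an Ahlfors--Beurling/Tukia--V\"ais\"al\"a type extension across each removed ball to get a uniform $K$.

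There is one substantive difference worth noting. You pass to the full sphere by asserting that the residual (limit) set has measure zero and is removable for quasiconformal maps. The paper instead first extends $f$ to a global $K$-quasiconformal map of $\Sph^n$ (ball by ball), and then \emph{redefines} it step by step via the reflection formula, obtaining a sequence of $K$-quasiconformal maps of $\Sph^n$; the equivariant limit is produced by the standard compactness of normalized $K$-quasiconformal families. This order of operations is cleaner for two reasons. First, your claim $\sup_{\Sph^n}K_{f_\Lambda}=\sup_\Lambda K_{f_\Lambda}$ is not yet meaningful when $\Lambda$ has empty interior: the pointwise dilatation of $f_\Lambda$ at $p\in\Lambda$ depends on how $f_\Lambda$ has been defined in the adjacent balls, so the ball extension must logically precede, not follow, the dilatation bound. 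Second, the measure-zero statement for the limit set of a Schottky group with infinitely many generators is not obvious and is not what the original argument in \cite{BKM09} uses; compactness sidesteps this entirely. With that reorganization---extend first, then reflect, then pass to a limit by compactness---your outline matches the paper's.
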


\begin{lemma}\cite[Lemma~6.1]{BKM09}\label{L:NestedBalls}
Suppose that $f$ is a continuous map of $\R^n,\ n\in\N$, to itself that is differentiable at the origin $0$. Suppose further that there is a sequence of open geometric balls $(B_i)_{i\in\N}$ that contain 0, such that ${\rm diameter}(B_i)\to0,\ i\to\infty$, and for each $i\in\N$, the set  $f(B_i)$ is a geometric ball. Then the derivative $D_0f$ is a (possibly degenerate or orientation reversing)  conformal linear map, i.e., $D_0f=\lambda T$, where $\lambda\geq 0$ and $T$ is a linear isometry.  
\end{lemma}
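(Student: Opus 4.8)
The plan is to run a blow-up (rescaling) argument at the origin and to exploit the fact, recalled in Section~\ref{SS:QcQs}, that a linear map sending the unit ball to a round ball is conformal. Write $L=D_0f$, so that differentiability at $0$ gives $f(x)=f(0)+L(x)+|x|\,\eta(x)$ with $\eta(x)\to0$ as $x\to0$. Let $c_i$ and $\rho_i$ be the center and radius of $B_i$; since $0\in B_i$ we have $|c_i|<\rho_i$, and the hypothesis $\diam(B_i)\to0$ forces $\rho_i\to0$. The goal is to rescale $f$ on $B_i$ back to the unit ball, pass to a limit, and read off the shape of $L(\mathbb B)$.

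First I would introduce the normalized maps $F_i(y)=\rho_i^{-1}\bigl(f(c_i+\rho_i y)-f(0)\bigr)$ for $y$ in the closed unit ball $\overline{\mathbb B}$. A short computation yields $F_i(y)=L(y)+v_i+r_i(y)$, where $v_i=\rho_i^{-1}L(c_i)$ and $r_i(y)=\rho_i^{-1}|c_i+\rho_i y|\,\eta(c_i+\rho_i y)$. Since $c_i+\rho_i y\in\overline{B_i}\subseteq\{|x|\le 2\rho_i\}$, one gets $\sup_{\overline{\mathbb B}}|r_i|\le 2\sup_{|x|\le2\rho_i}|\eta(x)|\to0$, and $|v_i|\le\|L\|$ is bounded, so after passing to a subsequence $v_i\to v$. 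Hence $F_i\to G$ uniformly on $\overline{\mathbb B}$, where $G=L+v$ is affine. Crucially, $F_i(\mathbb B)=\rho_i^{-1}\bigl(f(B_i)-f(0)\bigr)$ is a geometric ball, being a rescaled translate of the ball $f(B_i)$.

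Next I would identify the blow-up limit. A continuity argument shows $F_i(\overline{\mathbb B})=\overline{F_i(\mathbb B)}$ is a closed geometric ball $\overline{B}(a_i,s_i)$; its centers and radii stay bounded because $F_i\to G$ uniformly on the compact set $\overline{\mathbb B}$, so I may pass to a further subsequence with $a_i\to a$ and $s_i\to s\ge0$. Using uniform convergence together with the surjectivity of each $F_i$ onto $\overline{B}(a_i,s_i)$, I would check both inclusions and conclude $G(\overline{\mathbb B})=\overline{B}(a,s)$, which degenerates to a single point when $s=0$. Thus the closed solid ellipsoid $L(\overline{\mathbb B})+v=G(\overline{\mathbb B})$ is either a genuine ball or a point.

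Finally I would translate this back to $L$. If $G(\overline{\mathbb B})$ is a point, then $L(\overline{\mathbb B})$ is a point and $L=0$, the degenerate case $\lambda=0$. If $G(\overline{\mathbb B})$ is a nondegenerate ball, then $L(\overline{\mathbb B})$ is a nondegenerate ball, and the fact recalled in Section~\ref{SS:QcQs} gives $L=\lambda T$ with $\lambda>0$ and $T$ a linear isometry. In either case $L=\lambda T$ with $\lambda\ge0$, as required. I expect the main obstacle to be the middle step: rigorously establishing that $F_i(\overline{\mathbb B})$ is exactly a closed ball and that its limit equals $G(\overline{\mathbb B})$, in particular controlling the interplay between open and closed balls and ruling out the intermediate ``flat'' ellipsoid possibility, where $L$ is neither zero nor invertible. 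The latter is excluded by noting that such a degenerate ellipsoid has empty interior and therefore cannot arise as a limit of balls of definite size.
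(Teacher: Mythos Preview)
Your argument is correct and follows essentially the same blow-up strategy as the paper: rescale the balls $B_i$ to unit size, use differentiability at $0$ to see that the rescaled maps converge to the linear part $L=D_0f$, and then read off conformality from the fact that the limit of the image balls is again a (possibly degenerate) ball. The only cosmetic difference is that the paper rescales by $r_i$ without recentering, so that the normalized domain balls $\overline{B_i}/r_i$ themselves converge in the Hausdorff sense to some unit ball $\overline{B}\ni 0$, whereas you translate by the centers $c_i$ so that the domain is always $\overline{\mathbb{B}}$ and the translation resurfaces as the bounded drift $v_i=\rho_i^{-1}L(c_i)$; this is an equivalent bookkeeping choice. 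Your treatment of the ``flat ellipsoid'' obstruction is also fine: since $G(\overline{\mathbb{B}})=\overline{B}(a,s)$ is either a point or a ball with nonempty interior, while a rank-deficient nonzero $L$ would produce an image with empty interior that is not a single point, that intermediate case is indeed excluded.
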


For reader's convenience, we outline the proofs of these lemmas and refer to~\cite{BKM09} for more details. 

To prove Lemma~\ref{L:EqvExt}, we let $\dee B_i,\ i\in I$, be  a peripheral sphere for $\Lambda$ with the largest radius, and let $\dee B_i'$ be the peripheral sphere for $\Lambda'$ that corresponds to $\dee B_i$ under the given quasisymmetry $f$. Note here that peripheral spheres of a Schottky set are characterized topologically by the property that their removal does not separate the given Schottky set. 

Let $\gamma_i\in\Gamma_\Lambda$ be the reflection in $\dee B_i,\ i\in I$, and $\gamma_i'\in\Gamma_{\Lambda'}$ be the corresponding reflection in $\dee B_i'$. We use $\gamma_i$ and $\gamma_i'$ to double the Schottky sets $\Lambda$ and $\Lambda'$ across $\dee B_i$ and $\dee B_i'$, respectively, and denote the doubled spaces by $\Lambda_i$ and $\Lambda_i'$, respectively. Both, $\Lambda_i$ and $\Lambda_i'$ are Schottky sets since $\gamma_i$ and $\gamma_i'$ are M\"obius transformations. We next extend the map $f$ to $\Lambda_i\setminus\Lambda$ by the formula 
\begin{equation}\label{E:Ext}
f=\gamma_i'\circ f\circ\gamma_i^{-1}. 
\end{equation}
We now replace the Schottky set $\Lambda$ by $\Lambda_i$, the Scottky set $\Lambda'$ by $\Lambda_i'$,  and continue this doubling process  indefinitely. 

It is not hard to see that in this way we obtain a new map $f_\Lambda$ defined on a dense subset of $\Sph^n$ and that is equivariant with respect to $\Gamma_\Lambda$, i.e., 
$$
f_\Lambda\circ \gamma=\gamma'\circ f_\Lambda
$$ 
for each $\gamma\in\Gamma_\Lambda$ and the corresponding $\gamma'\in\Gamma_{\Lambda'}$. The last property follows from~\eqref{E:Ext}.  It is also straightforward that such a map extends to a homeomorphism of $\Sph^n$. Indeed, the doubling process across the peripheral spheres described above guarantees that, for any given $r>0$, all the radii of peripheral spheres in the domain and target Schottky sets would eventually (after doubling a certain number of times) be less than $r$. 

The hard part is to show that the resulting map $f_\Lambda$ is quasiconformal. We do this by  first showing that the given quasisymmetry $f\:\Lambda\to\Lambda'$ has a $K$-quasiconformal extension to all of $\Sph^n$. This can be done by applying the classical Ahlfors--Beurling extension result for each pair $B_i, B_i'$ of removed balls, such that $f\:\dee B_i\to\dee B_i'$. Since $B_i$ and $B_i',\ i\in I$, are geometric balls, it is not hard to see that the resulting global extension is $K$-quasiconformal for some $K$.  We next show that using the formula~\eqref{E:Ext} to redefine a given quasiconformal map across $\dee B_i$ does not change the dilatation $K$. This is elementary as $\gamma_i$ and $\gamma_i'$ are M\"obius transformations and therefore have dilatations equal to 1. Finally, we use standard compactness arguments for families of normalized uniformly quasiconformal maps.

The proof of Lemma~\ref{L:NestedBalls} is elementary. Indeed, let $r_i$ be the radius of the ball $B_i,\ i\in\N$, and consider the sequence of rescaled balls $(B_i/r_i)_{i\in\N}$. Each such rescaled ball has radius 1, and, by possibly passing to a subsequence, we may assume that the sequence $(\overline{B_i}/r_i)_{i\in\N}$ of closed balls converges in the Hausdorff sense. The limit must be a closed geometric ball $\overline B$ of radius 1 that contains the origin. Now, the sequence of rescaled maps $(f_i)_{i\in\N}$, where 
$$
f_i(x)=f(r_ix)/r_i,
$$ 
converges locally uniformly to the linear map $D_0f$. This follows from the assumptions that $f$ is differentiable at 0, and $r_i\to0$ as $i\to\infty$. Furthermore, our assumption gives that
$$
f_i(\overline{B_i}/r_i)=f(\overline{B_i})/r_i
$$
is a closed geometric ball $\overline{B_i'}$. Thus  the  Hausdorff limit of $(\overline{B_i'})_{i\in\N}$ is also a closed  geometric ball $\overline{B'}$, possibly degenerate. This limit must be the image of $\overline B$ under $D_0f$. If it is not degenerate, we conclude that $D_0f$ is conformal.

\section{Proof of Theorem~\ref{T:Main}}\label{S:Proof}
\no
We argue by contradiction and assume that there exists a Denjoy-type $C^1$-dif\-feo\-mor\-phism $f$ of $\T^n$  whose wandering domains are round.
The proof of Theorem~\ref{T:Main} is a simple consequence of the following two lemmas.
\begin{lemma}\label{L:ConfMin}
A lift $\tilde f$ of $f$ under $\psi$ is conformal at each point of  $\tilde\Lambda=\psi^{-1}(\Lambda)\subseteq\R^n$. I.e., for each $p\in\tilde\Lambda$ we have $D_p\tilde f=\lambda T$, where $\lambda>0$ and $T$ is a linear isometry. 
\end{lemma}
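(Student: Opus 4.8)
The plan is to apply Lemma~\ref{L:NestedBalls} to $\tilde f$ itself at points of $\tilde\Lambda$, using that $f$ permutes the round wandering domains, and then to propagate conformality over all of $\tilde\Lambda$ using minimality together with the continuity of the derivative of the $C^1$ map $\tilde f$. First I would fix the setup. The semiconjugacy $h$ is homotopic to the identity and the minimal translation $R$ acts trivially on homology, so $f$ is homotopic to the identity and its lift $\tilde f$ commutes with the integer translations; as recorded in Section~\ref{SS:QcQs}, $\tilde f$ is $K$-quasiconformal on $\R^n$ and extends to a $K$-quasiconformal homeomorphism of $\Sph^n=\R^n\cup\{\infty\}$ fixing $\infty$. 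Since $h\circ f=R\circ h$ permutes the wandering domains, $\tilde f$ carries each removed ball $B_i$ onto another removed ball, so $\Sigma:=\tilde\Lambda\cup\{\infty\}=\Sph^n\setminus\bigcup_i B_i$ is a Schottky set and $\tilde f|_\Sigma$ is a quasisymmetric self-map of $\Sigma$.

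The key point is that $\tilde f$ sends geometric balls (the removed balls) to geometric balls and, being $C^1$, is differentiable at every point. Fix $p\in\tilde\Lambda$. Because $\tilde\Lambda$ is nowhere dense, the removed balls accumulate at $p$; suppose we can choose removed balls $B_{i_k}$ with $\diam(B_{i_k})\to0$ and with $\dist(p,B_{i_k})$ comparable to the radius of $B_{i_k}$. Rescaling $\tilde f$ about $p$ by these radii, exactly as in the proof of Lemma~\ref{L:NestedBalls}, the rescaled maps converge to $D_p\tilde f$; the only role of the containment hypothesis there is to keep the rescaled balls in a bounded region, and comparability of distance and radius ensures this equally well. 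The rescaled copies of $B_{i_k}$ subconverge to a non-degenerate ball, and their images, which are balls and are non-degenerate because $\tilde f$ is a diffeomorphism, subconverge to the image of that ball under $D_p\tilde f$. Hence $D_p\tilde f=\lambda T$ with $\lambda>0$, i.e.\ $\tilde f$ is conformal at $p$.

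To reach every point of $\tilde\Lambda$ I would argue by density. The set $W\subseteq\Lambda$ of points admitting such a comparable-scale approximation by wandering domains is invariant under $f$: the $C^1$-diffeomorphism $f$ is locally bi-Lipschitz and maps wandering domains to wandering domains, so it preserves the comparability of distance and radius up to a bounded factor. If $W$ is non-empty, then by minimality of $\Lambda$ under $f$ it is dense in $\Lambda$, and hence $\psi^{-1}(W)$ is dense in $\tilde\Lambda$. Since $\tilde f$ is $C^1$, the map $p\mapsto D_p\tilde f$ is continuous, and conformality, being a closed condition on the derivative, passes from the dense set $\psi^{-1}(W)$ to all of $\tilde\Lambda$. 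This would yield the lemma.

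The step I expect to be the main obstacle is the geometric input used above: that a non-empty (hence, by minimality, dense) set of points of $\tilde\Lambda$ is approximated by removed balls at a comparable relative scale, rather than only horospherically, with balls vastly smaller than their distance to $p$, in which case the rescaling collapses and Lemma~\ref{L:NestedBalls} yields nothing. This is exactly where the reflection group $\Gamma_\Sigma$ enters. By Lemma~\ref{L:EqvExt} the map $\tilde f|_\Sigma$ has an equivariant quasiconformal extension $F$, and equivariance together with $\tilde f$ mapping removed balls to removed balls shows that $F$ carries every ball $\gamma(B_i)$, $\gamma\in\Gamma_\Sigma$, onto a ball; reflecting in the peripheral spheres that cluster at $p$ then produces nested chains of such balls of bounded relative size. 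Turning this into the required comparable-scale approximants in $\tilde\Lambda$ — equivalently, establishing the relevant porosity of $\tilde\Lambda$ (uniform relative density of the removed balls at all scales) from the theory of~\cite{BKM09} — and checking that the resulting derivative of $F$ agrees with $D_p\tilde f$ on $\tilde\Lambda$, is the technical core of the argument.
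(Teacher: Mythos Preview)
Your plan leaves the porosity step genuinely open, and it is not clear it can be closed: a Schottky set need not be uniformly porous, so the ``comparable relative scale'' approximation of $p$ by removed balls that you require may fail at every point of $\tilde\Lambda$. The detour through the equivariant extension $F$ and nested reflected balls is essentially the content of Lemma~\ref{L:QCConf}, not of this lemma, and it yields information about $D_pF$ rather than $D_p\tilde f$; your last sentence acknowledges that reconciling the two is itself nontrivial. So as written the argument does not close.

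The paper's proof sidesteps all of this with one observation you are missing: because $f$ is $C^1$ on the compact torus, differentiability of $\tilde f$ is \emph{uniform}, i.e.\ $\tilde f(q+x)-\tilde f(q)-D_q\tilde f(x)=o(|x|)$ with the $o$-term uniform in $q$. One therefore centers the rescaling not at $p$ but at the centers $q_i$ of removed balls $B_i$ accumulating at $p$. The image of $\partial B_i$ under $x\mapsto \tilde f(q_i+x)-\tilde f(q_i)$ is a genuine sphere $S_i$ enclosing $0$ (since $\tilde f$ sends balls to balls), and by uniform differentiability together with continuity of $q\mapsto D_q\tilde f$, the rescalings $S_i/r_i$ converge to $D_p\tilde f(\partial B(0,1))$. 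Since $D_p\tilde f$ is invertible this limit is a non-degenerate sphere, so $D_p\tilde f$ is conformal. No porosity, no minimality, and no Schottky group are needed for Lemma~\ref{L:ConfMin}; that machinery enters only in Lemma~\ref{L:QCConf}.
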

\begin{proof}
This is equivalent to proving that for each $p\in\tilde\Lambda$, the dilatation of $D_p\tilde f$ equals 1. 

Since $\Lambda$ is nowhere dense,  so is $\tilde\Lambda$. Thus for each $p\in\tilde\Lambda$ there exists a sequence of complementary balls $(B_i)_{i\in\N}$ of $\tilde\Lambda$ that accumulate at $p$, i.e., 
$$
{\rm diameter} B_i\to0\quad {\rm and}\quad d_{\rm Hausd}(\overline{B_i},\{p\})\to0\  {\rm as}\  i\to\infty,
$$  
where $d_{\rm Hausd}$ denotes the Hausdorff distance.
Also, since $f$ is $C^1$-dif\-fe\-ren\-ti\-able on a compact set $\T^n$,  we have
\begin{equation}\label{E:Diff}
\tilde f(q+x)-\tilde f(q)-D_q\tilde f(x)=o(x),\quad x\to0,
\end{equation}
where $|o(x)|/|x|\to0$ as $|x|\to0$, uniformly in $q$. This claim follows immediately from the 
integral form of the Mean Value Theorem for vector-valued functions. 

Now, in~\eqref{E:Diff} we choose $q=q_i$ to be the center of the ball $B_i$, and $|x|=r_i$, where $r_i$ is the radius of $B_i,\ i\in\N$. Then our assumption that the wandering domains are round implies that the image of the peripheral  sphere $\dee B_i$ under the map 
$$
x\mapsto \tilde f(q_i+x)-\tilde f(q_i)
$$ 
is a sphere $S_i$ that encloses 0. Therefore, the limit of the rescaled spheres $S_i/r_i,\ i\in\N$, is a sphere that encloses 0, but that is possibly degenerate, i.e., its radius may be 0 or $\infty$. However, according to~\eqref{E:Diff}, this limit must be the image of the unit sphere under the linear map $D_p\tilde f$, which is non-degenerate because $f$ is a diffeomorphism. This implies that the limit sphere is non-degenerate and the map $D_p\tilde f$ is conformal.
\end{proof}

In what follows, we identify $\R^n\cup\{\infty\}$ with $\Sph^n$ via stereographic projection. The set $\tilde\Lambda\cup\{\infty\}$ is a Schottky set in $\Sph^n$, and we continue to denote it by $\tilde\Lambda$ to simplify notations. As pointed out in Subsection~\ref{SS:QcQs}, the map $\tilde f$ is $K$-quasiconformal in $\R^n$ for some $K\geq1$. 
Therefore it is also a quasiconformal map of $\Sph^n$ onto itself that fixes $\infty$. For homeomorphisms of $\Sph^n$ being quasiconformal and quasisymmetric are equivalent. We conclude that $\tilde f$ is a quasisymmetric map of $\Sph^n$, and hence so is its restriction to the Schottky set $\tilde\Lambda$. From Lemma~\ref{L:EqvExt} we now have that $\tilde f$ can be redefined on $\Sph^n\setminus\tilde\Lambda$ such that it becomes an equivariant quasiconformal map, denoted $\tilde f_{\tilde\Lambda}$.

\begin{lemma}\label{L:QCConf}
The map $\tilde f_{\tilde\Lambda}$ is conformal everywhere.
\end{lemma}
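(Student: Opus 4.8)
The plan is to use the equivariance of $\tilde f_{\tilde\Lambda}$ to spread the pointwise conformality of $\tilde f$ obtained in Lemma~\ref{L:ConfMin} from $\tilde\Lambda$ to the entire $\Gamma_{\tilde\Lambda}$-orbit of $\tilde\Lambda$, and then to cover the remaining ``residual'' points with Lemma~\ref{L:NestedBalls}. Since $\Lambda$ is completely invariant under $f$, the set $\tilde\Lambda$ is invariant under $\tilde f$, so $\tilde f_{\tilde\Lambda}$ is equivariant with respect to the single group $\Gamma_{\tilde\Lambda}$. Write $\Omega=\bigcup_{\gamma\in\Gamma_{\tilde\Lambda}}\gamma(\tilde\Lambda)$ and $R=\Sph^n\setminus\Omega$. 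My goal is to show that the derivative $D_p\tilde f_{\tilde\Lambda}$ is conformal at almost every $p\in\Sph^n$, and then to upgrade this to conformality everywhere using the rigidity of quasiconformal maps.

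First I would treat $\Omega$. Fix $q=\gamma(p)$ with $\gamma\in\Gamma_{\tilde\Lambda}$ and $p\in\tilde\Lambda$, and let $\gamma'\in\Gamma_{\tilde\Lambda}$ be the element provided by the equivariance of Lemma~\ref{L:EqvExt}, so that $\tilde f_{\tilde\Lambda}=\gamma'\circ\tilde f_{\tilde\Lambda}\circ\gamma^{-1}$ near $q$. By the chain rule $D_q\tilde f_{\tilde\Lambda}$ is the composition of the linear maps $D\gamma'$, $D_p\tilde f$, and $D\gamma^{-1}$; the outer two are conformal because $\gamma,\gamma'$ are M\"obius, and $D_p\tilde f$ is conformal by Lemma~\ref{L:ConfMin}. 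A composition of conformal linear maps is conformal, so $D_q\tilde f_{\tilde\Lambda}$ is conformal at every $q\in\Omega$. I would also record here that $\tilde f_{\tilde\Lambda}$ sends every removed ball, at every level of the doubling construction, onto a geometric ball: the peripheral spheres $\dee B_i$ are sent to peripheral spheres, and every deeper removed ball has the form $\gamma(B_i)$, whose image $\gamma'(B_{i}')$ is a M\"obius image of a ball and hence a ball.

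Next I would handle $R$ via Lemma~\ref{L:NestedBalls}. The key geometric point is that every $p\in R$ lies in a nested sequence of removed balls $B^{(k)}\ni p$ with $\diam B^{(k)}\to0$: since $p\notin\tilde\Lambda$ we have $p\in B_{i_1}$, and since $p\notin\gamma_{i_1}(\tilde\Lambda)\subset\Omega$ the point $p$ must lie in a level-one removed ball $\gamma_{i_1}(B_{i_2})\subset B_{i_1}$, and so on. The diameters tend to $0$ because $\tilde\Lambda$ is nowhere dense: if $\bigcap_kB^{(k)}$ had nonempty interior $U$, then applying the M\"obius maps $\gamma_{i_1}\cdots\gamma_{i_{k-1}}$ to the dense family of removed balls would force $U$ to meet removed balls at level $k+1$ other than $B^{(k+1)}$, contradicting $U\subset B^{(k+1)}$. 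Because $\tilde f_{\tilde\Lambda}$ is quasiconformal it is differentiable almost everywhere, so at almost every $p\in R$ the balls $(B^{(k)})$ and the previous paragraph meet the hypotheses of Lemma~\ref{L:NestedBalls}, which yields $D_p\tilde f_{\tilde\Lambda}=\lambda T$; this is nondegenerate since $\tilde f_{\tilde\Lambda}$ is a homeomorphism, hence conformal.

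Combining the two cases, $D_p\tilde f_{\tilde\Lambda}$ is conformal at almost every $p\in\Sph^n$, i.e. the pointwise dilatation of $\tilde f_{\tilde\Lambda}$ equals $1$ almost everywhere. A quasiconformal map with this property is $1$-quasiconformal and therefore a M\"obius transformation, by Liouville's theorem for $n\geq3$ and by the identification of $1$-quasiconformal maps with conformal maps for $n=2$; a M\"obius transformation satisfies $K_{\tilde f_{\tilde\Lambda}}(p)=1$ at every point, which is precisely the assertion. The step I expect to be the main obstacle is this final passage from conformality almost everywhere to conformality everywhere: it requires the analytic theory of quasiconformal maps (almost-everywhere differentiability together with the vanishing of the dilatation data forcing the map to be M\"obius), supported by the structural fact that the limit set of the reflection group is nowhere dense, which is what guarantees that the nested balls above genuinely shrink to each point of $R$.
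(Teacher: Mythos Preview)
Your overall architecture matches the paper's: split $\Sph^n$ into the orbit $\Omega=\bigcup_{\gamma}\gamma(\tilde\Lambda)$ and the residual set $R$, use equivariance together with Lemma~\ref{L:ConfMin} on $\Omega$, invoke Lemma~\ref{L:NestedBalls} on $R$, and then upgrade a.e.\ conformality to global conformality. The treatment of $R$ and the final step are essentially as in the paper.

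There is, however, a genuine gap in your handling of $\Omega$. From the equivariance $\tilde f_{\tilde\Lambda}=\gamma'\circ\tilde f_{\tilde\Lambda}\circ\gamma^{-1}$ the chain rule yields
\[
D_q\tilde f_{\tilde\Lambda}\;=\;D\gamma'\circ D_p\tilde f_{\tilde\Lambda}\circ D\gamma^{-1},
\]
with middle factor $D_p\tilde f_{\tilde\Lambda}$, \emph{not} $D_p\tilde f$ as you wrote. Lemma~\ref{L:ConfMin} speaks only about $D_p\tilde f$, and the two maps $\tilde f$ and $\tilde f_{\tilde\Lambda}$ coincide only on the nowhere dense set $\tilde\Lambda$; agreement on a nowhere dense set does not force derivatives---which are determined by behavior on a full neighborhood---to coincide. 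This is exactly the difficulty the paper isolates: it restricts to Lebesgue density points $p$ of $\tilde\Lambda$ and uses the Coarea formula to produce radii $r_k\to0$ along which $\tilde\Lambda$ has asymptotically full measure on $\partial B(p,r_k)$; that density is what pins down $D_p\tilde f_{\tilde\Lambda}=D_p\tilde f$, after which your chain-rule reduction is valid at a.e.\ point of $\Omega$. (If $|\tilde\Lambda|=0$ then $|\Omega|=0$ and this case is vacuous, but you have not made that reduction either.) As written, your argument on $\Omega$ assumes precisely the identity that constitutes the main analytic content of the lemma.
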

\begin{proof}
Since the map $\tilde f_{\tilde\Lambda}$ is quasiconformal, it is differentiable almost everywhere. We prove that $\tilde f_{\tilde\Lambda}$ is conformal at almost every point $p$ of differentiability. 

Since we do not assume whether $\Lambda$ has positive or zero measure, we need to consider the following two cases: 

\noindent
-- $p\in \gamma(\tilde\Lambda)$ for some $\gamma\in\Gamma_{\tilde\Lambda}$, or 

\noindent
-- there exists a sequence $(\gamma_k)_{k\in\N}$ of elements of $\Gamma_{\tilde\Lambda}$ such that $p\in B_{i_k},\ k\in\N$, where $B_{i_k}$  is a removed ball for $\gamma_k(\tilde\Lambda)$. 

We start with the first case, i.e.,  $p\in \gamma(\tilde\Lambda)$ for some $\gamma\in\Gamma_{\tilde\Lambda}$.
In this case we may assume that $p$ is a Lebesgue density point of $\gamma(\tilde\Lambda)$ because such points form a set of full measure. Furthermore, since $\gamma$ is conformal and $\tilde f_{\tilde \Lambda}$ is $\Gamma_{\tilde\Lambda}$-equivariant, it is enough to prove that $\tilde f_{\tilde \Lambda}$ is conformal at each Lebesgue density point $p$ of $\tilde\Lambda$.  We have
$$
\lim_{r\to0}\frac{|\tilde\Lambda\cap B(p,r)|}{|B(p,r)|}=1,
$$
where $|A|$ stands for the Lebesgue measure of a set $A$ in an appropriate dimension.
An elementary application of the Coarea formula gives 
\begin{equation}\label{E:Lebesgue}
\lim_{k\to\infty}\frac{|\tilde\Lambda\cap \dee B(p,r_k)|}{|\dee B (p,r_k)|}=1,
\end{equation}
where  $(r_k)_{k\in\N}$ is a sequence of positive numbers tending to 0. 
Since $\tilde f_{\tilde\Lambda}$ is differentiable at $p$, we have
$$
\frac{\tilde f_{\tilde \Lambda}(p+r_kx)-\tilde f_{\tilde\Lambda}(p)}{r_k}=D_p\tilde f_{\tilde\Lambda}(x)+o(1),\quad k\to\infty,
$$
where $|x|=1$. We denote by $f_k$ the map on the left-hand side of this equation. Namely,
$$
\tilde f_k(x)=\frac{\tilde f_{\tilde\Lambda}(p+r_kx)-\tilde f_{\tilde\Lambda}(p)}{r_k}.
$$ 
We thus have
\begin{equation}\label{E:D}
\tilde f_k\to D_p\tilde f_{\tilde\Lambda},\quad k\to\infty,
\end{equation}
uniformly on the unit sphere $\dee B(0,1)$.

Let $x,\ |x|=1$, be arbitrary. From~\eqref{E:Lebesgue} we know that for each $k\in\N$ there exists $x_k\in\tilde\Lambda_k\cap \dee B(0,1)$, where $\tilde\Lambda_k={(\tilde\Lambda-p)}/{r_k}$, such that the sequence $(x_k)_{k\in\N}$ converges to $x$. Convergence in~\eqref{E:D} then gives that $\tilde f_k(x_k)\to D_p\tilde f_{\tilde\Lambda}(x)$. On the other hand, on the set $\tilde\Lambda$, the map $\tilde f_{\tilde\Lambda}$ agrees with $\tilde f$. Therefore, since $\tilde f$ is also differentiable at $p$, we have $\tilde f_k(x_k)\to D_p\tilde f(x)$. We hence arrive at the equality
$$
D_p\tilde f_{\tilde\Lambda}=D_p\tilde f.
$$
Since $\tilde f$ is conformal at $p$, we conclude that $\tilde f_{\tilde\Lambda}$ is also conformal at $p$.

We now deal with the case $\{p\}=\cap_{k\in\N} B_{i_k},\ {\rm diameter}B_{i_k}\to0$ as $k\to\infty$, where $B_{i_k},\ k\in\N$,  is a removed ball for $\gamma_k(\tilde\Lambda)$ with $\gamma_k\in\Gamma_{\tilde\Lambda}$. Without loss of generality we may assume that $p=\tilde f_{\tilde\Lambda}(p)=0$. Because $\tilde f_{\tilde\Lambda}$ is equivariant with respect to $\Gamma_{\tilde\Lambda}$, for all $k\in\N$, it takes removed balls of $\gamma_k(\tilde\Lambda)$  to removed balls of $\gamma_k'(\tilde\Lambda)$, for some $\gamma_k'\in\Gamma_{\tilde\Lambda}$. Now, since all the removed balls are geometric balls, the conformality of $\tilde f_{\tilde\Lambda}$ at $p$ follows from Lemma~\ref{L:NestedBalls}.

 Since $\tilde f_{\tilde\Lambda}$ is quasiconformal in $\R^n$ and conformal at almost every point, it is conformal everywhere. 
\end{proof}

The proof of Theorem~\ref{T:Main} now follows immediately from Lemmas~\ref{L:ConfMin} and~\ref{L:QCConf}.
Indeed, since $\tilde f_{\tilde\Lambda}$ is conformal and $\tilde f_{\tilde\Lambda}(\infty)=\infty$, it has to be of the form $\tilde f_{\tilde\Lambda}(x)=\lambda Tx+a$ in  $\R^n$, where $\lambda>0$ and $T$ is a linear isometry. In addition, since $f$ is a homeomorphism of $\T^n$, the map $\tilde f$ is its lift to $\R^n$,  and $\tilde f_{\tilde\Lambda}|_{\tilde\Lambda}=\tilde f|_{\tilde\Lambda}$, we must have that $\tilde f_{\tilde\Lambda}$ is an isometry, i.e., $\lambda=1$. This implies that $\tilde f_{\tilde\Lambda}$, and hence $\tilde f$ cannot change the sizes (i.e., the radii)  of removed balls. This is clearly a contradiction to the assumption that these balls project under $\psi$ to wandering domains of $f$.
\qed

\section{Concluding remarks}
\no
As mentioned in the introduction, for the maps constructed by  P.~D.~McSwiggen in~\cite{McS93, McS95} the wandering domains have non-uniform geometry. 
We finish this note by stating the following conjecture.
\begin{conjecture}\label{C:QC}
If $n\geq 2$, there is no Denjoy-type quasiconformal homeomorphism $f$ of $\T^n$ all of whose wandering domains are quasi-round.
\end{conjecture}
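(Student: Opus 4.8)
The plan is to run the same three-step scheme as in the proof of Theorem~\ref{T:Main} --- produce a group-equivariant quasiconformal extension of the lift, upgrade it to a conformal map, and then invoke the rigidity argument of Section~\ref{S:Proof} --- but to insert two reductions that account for the weaker hypotheses. Throughout I interpret \emph{quasi-round} in the uniform sense: there is $L\geq 1$ such that every complementary component $B_i$ of $\tilde\Lambda=\psi^{-1}(\Lambda)$ is an $L$-quasiball, i.e.\ $\dee B_i$ is an $L$-quasisphere and $B(x_i,r_i)\subseteq B_i\subseteq B(x_i,Lr_i)$ for concentric balls. Assuming toward a contradiction that such an $f$ exists, its lift $\tilde f$ is $K$-quasiconformal on $\R^n$ and permutes the components $B_i$, so $\tilde\Lambda$ is a $\Z^n$-periodic, nowhere dense, uniformly quasiround ``quasi-Schottky'' set.

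\emph{Step 1 (reduction to round components).} I would first try to uniformize this set, producing a $\Z^n$-periodic global quasiconformal homeomorphism $\Phi$ of $\Sph^n$, homotopic to the identity, that carries each $B_i$ onto a genuine geometric ball, so that $\Lambda^\ast=\Phi(\tilde\Lambda)$ is a round Schottky set and $\tilde g=\Phi\circ\tilde f\circ\Phi^{-1}$ is a quasiconformal lift permuting round complementary balls. Because $\Phi$ is demanded to be $\Z^n$-periodic, it descends to $\T^n$, so $\tilde g$ is again the lift of a Denjoy-type homeomorphism $g=\Phi f\Phi^{-1}$, now with round wandering domains, and $f$ is conjugate to a minimal translation if and only if $g$ is; thus it would suffice to rule out the quasiconformal-with-round case for $g$. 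The construction of $\Phi$ is a replacement across the accumulation set $\tilde\Lambda$: on each $B_i$ one uses a $K'$-quasiconformal parametrization of the $L$-quasisphere $\dee B_i$ and extends it (Tukia--V\"ais\"al\"a / Ahlfors--Beurling type) onto a round ball, uniformity of $L$ together with the relative separation of the $B_i$ bounding the global dilatation. I flag, however, that upgrading \emph{quasiround} components to \emph{genuinely round} ones is in effect a uniformization statement for quasi-Schottky sets in $\Sph^n$, $n\geq 3$, and is itself delicate; this is the first place where the argument is not routine.

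\emph{Step 2 (equivariant extension and the measure-zero case).} For the reduced map $\tilde g$ on the round Schottky set $\Lambda^\ast$, Lemma~\ref{L:EqvExt} applies verbatim, since it requires only that $\tilde g$ restrict to a quasisymmetric map between Schottky sets; this yields a $\Gamma_{\Lambda^\ast}$-equivariant quasiconformal extension $\tilde g_{\Lambda^\ast}$. To conclude I would split on the Lebesgue measure of $\Lambda^\ast$. If $|\Lambda^\ast|=0$, then almost every point of $\Sph^n$ lies in a nested sequence of removed balls of shrinking diameter, and Lemma~\ref{L:NestedBalls} --- which needs only pointwise differentiability and the fact that the balls are geometric --- gives conformality almost everywhere, hence everywhere by quasiconformality; one then finishes exactly as in Section~\ref{S:Proof} applied to $g$, concluding that $g$, and so $f$, is conjugate to a minimal translation, a contradiction.

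\emph{Step 3 and the main obstacle (positive measure).} The crux is the case $|\Lambda^\ast|>0$, where the scheme of Lemma~\ref{L:QCConf} breaks down precisely where Lemma~\ref{L:ConfMin} was used. At a Lebesgue density point $p\in\Lambda^\ast$ one still obtains $D_p\tilde g_{\Lambda^\ast}=D_p\tilde g$, but there is no longer any reason for this common derivative to be conformal: under the $C^1$ hypothesis the roundness of small complementary balls accumulating at $p$ forced conformality through the \emph{uniform} differentiability~\eqref{E:Diff}, which permits expanding about the ball centers $q_i\to p$, whereas with only almost-everywhere differentiability the roundness of far-away balls is invisible to differentiability at $p$ alone, since the error term dominates the ball's size. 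In short, quasiconformality plus (quasi)roundness yields only \emph{bounded} dilatation, which is automatic, not conformality. The natural way to close the gap is a rigidity theorem for the induced action: $\tilde g_{\Lambda^\ast}$ conjugates $\Gamma_{\Lambda^\ast}$ to a M\"obius reflection group, so its Beltrami coefficient is $\Gamma_{\Lambda^\ast}$-invariant, and if the boundary action of $\Gamma_{\Lambda^\ast}$ on the positive-measure set $\Lambda^\ast$ were ergodic (a Sullivan--Tukia-type statement) the dilatation would be an almost-everywhere constant conformal structure, which the normalization ($\tilde g_{\Lambda^\ast}(\infty)=\infty$, lift of a torus map) forces to be trivial. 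The obstruction is that $\Gamma_{\Lambda^\ast}$ is \emph{infinitely} generated --- there are infinitely many wandering domains --- so it is not geometrically finite and the classical ergodicity and rigidity theorems do not apply directly. I expect establishing conservativity and ergodicity of this infinitely generated reflection group on a set of positive measure (or, alternatively, excluding $|\Lambda^\ast|>0$ by a porosity or measure estimate extracted from the minimal dynamics) to be the main difficulty, and the heart of a full proof of the conjecture.
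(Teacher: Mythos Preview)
The statement you are trying to prove is a \emph{conjecture}; the paper does not prove it and explicitly leaves it open. There is therefore no ``paper's own proof'' to compare against. What the paper does contain, in the concluding remarks, is a discussion of exactly the obstacles you have identified, and your analysis matches that discussion quite closely.

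In particular: your Step~2 in the measure-zero case is essentially what the paper asserts follows from \cite[Theorem~1.1]{BKM09}, so that part is fine. Your Step~1 uniformization is known only for $n=2$ under the additional hypothesis that the quasidiscs are \emph{pairwise relatively separated}, via Bonk's carpet uniformization \cite{Bo11}; note also that the paper allows the target to be a different torus $\R^2/\mathcal{L}$, so insisting that $\Phi$ be $\Z^n$-periodic may be too restrictive. For $n\geq 3$ no such uniformization is available, and you are right to flag this as nontrivial. Your Step~3 diagnosis is also correct and agrees with the paper: without $C^1$ the uniform differentiability \eqref{E:Diff} fails, Lemma~\ref{L:ConfMin} is unavailable, and conformality at density points of $\tilde\Lambda$ cannot be deduced. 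The paper goes slightly further than you do here: rather than only proposing an ergodicity route to close the gap, it observes that in the positive-measure case one can build nontrivial $\Gamma_{\tilde\Lambda}$-equivariant Beltrami coefficients supported on $\tilde\Lambda$ and solve the Beltrami equation, which suggests that genuine quasiconformal deformations of round Schottky sets to round Schottky sets exist --- so the positive-measure case may well be where a counterexample lives, not merely where the proof is hard.

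In summary, your proposal is not a proof but a correct outline of the known partial results and the genuine obstructions; the paper itself offers no more.
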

For wandering domains to be \emph{quasi-round} it means that the complement of the minimal set $\Lambda$ is the disjoint union of  \emph{uniform quasiballs}, i.e., images of geometric balls under global $K$-quasiconformal maps with the same dilatation $K$.

The proof of Theorem~\ref{T:Main} above under the assumption that $f$ is only quasiconformal breaks down even if we assume that all the wandering domains are round. Indeed, without the $C^1$-differentiability assumption on $f$ we  cannot argue that a lift $\tilde f$ of $f$ to $\R^n$  is conformal at each point of $\tilde\Lambda=\psi^{-1}(\Lambda)$. However, if $\Lambda$ happens to have measure zero, then the above arguments apply and Conjecture~\ref{C:QC} holds in this case (i.e., assuming round wandering domains and $|\Lambda|=0$). This essentially follows from~\cite[Theorem~1.1]{BKM09}.

Also, let us assume that $n=2$ and the wandering domains are uniform quasiballs (or quasidiscs in this case) that are pairwise relatively separated (i.e., pairwise relative distances are bounded away from 0). 	Then one can use arguments as in~\cite{Bo11} to show that there is a quasiconformal map $\phi$ of $\T^2$ to another torus $\R^2/\mathcal L$, where $\mathcal L$ is a lattice in $\R^2$, such that the wandering domains of the conjugate  quasiconformal map 
$$
f_\phi=\phi\circ f\circ\phi^{-1}
$$ 
are round. If, in addition, we assume that $|\Lambda|=0$, then for the minimal set $\Lambda_\phi=\phi(\Lambda)$ of $f_\phi$ we have $|\Lambda_\phi|=0$, because quasiconformal maps send sets of measure zero to sets of measure zero. We thus conclude that Conjecture~\ref{C:QC} holds in this case as well, i.e., when $n=2$ and the wandering domains are uniformly separated uniform quasidiscs.

The above suggests that the conjecture should be true at least in the case $|\Lambda|=0$. 
In the opposite direction, let us assume that $n=2$ and $\Lambda\subseteq\T^2$ is such that $|\Lambda|>0$, and all the complementary components of its lift $\tilde\Lambda=\psi^{-1}(\Lambda)$ are  geometric discs. By choosing a non-trivial doubly periodic Beltrami coefficient $\mu$ on $\tilde\Lambda$, say $\mu\equiv1/2$, we can find a non-trivial  quasiconformal deformation $\tilde f$ of $\tilde\Lambda$ onto $\tilde\Lambda'$ so that all the complementary components of $\tilde\Lambda'$ are geometric discs. 

To achieve this, one needs to extend $\mu$ to all of $\Sph^2$ equivariantly with respect to the Schottky group $\Gamma_{\tilde\Lambda}$, and then solve the Beltrami equation. This construction gives an arbitrary quasiconformal deformation between two sets in 2-tori with round complementary components. To be able to produce a counterexample to Conjecture~\ref{C:QC} in this manner, one needs to ensure that
$\tilde\Lambda'=\tilde\Lambda$ and the map $\tilde f$ descends under $\psi$ to a Denjoy-type homeomorphism $f$ of $\T^2$. Either of these conditions appears to be non-trivial to verify.

\end{document}